\newtheorem{theorem}{Theorem}[section]
\newtheorem{proposition}[theorem]{Proposition}
\newtheorem{lemma}[theorem]{Lemma}
\newtheorem{claim}[theorem]{Claim}
\newtheorem{remark}[theorem]{Remark}
\def\@setthanks{\vspace{-\baselineskip}\def\thanks##1{\@par##1\@addpunct.}\thankses}
\DeclareMathOperator{\PSL}{PSL_2}
\DeclareMathOperator{\R}{\mathbb{R}}
\DeclareMathOperator{\HH}{\mathbb{H}}
\DeclareMathOperator{\Mod}{S_\text{Mod}}
\DeclareMathOperator{\M}{\overline{S}_{{\text{Mod}}}}
\newcommand{\NN}{\mathcal{N}}
\title{Graph manifolds that admit arbitrarily many Anosov flows}
\author{Adam Clay}
\address{University of Manitoba, Winnipeg}
\email{Adam.Clay@umanitoba.ca}
\thanks{A.C. was partially supported by NSERC grant RGPIN-2020-05343.}
\author{Tali Pinsky}
\address{The Technion, Haifa}
\email{talipi@technion.ac.il}
\thanks{T.P.\ was supported by the Israel Science Foundation (grant No. 51/4051).}
\date{\today}
\begin{document}
\maketitle

\begin{abstract}
For each natural number $n$, we construct an example of a graph manifold supporting at least $n$ different Anosov flows that are not orbit equivalent. 
Our construction is reminiscent of the Thurston-Handel construction \cite{HandelThurston}: we cut a geodesic flow on a surface of constant negative curvature into two pieces, modify the flow in each piece by pulling back to finite covers, and glue together compatible pairs of pullback flows along their boundary tori to get many distinct flows on the resulting graph manifold. 
\end{abstract}

\section{Introduction}
An Anosov flow, also called a hyperbolic flow, is a flow for which some directions are expanded and others are contracted. Such flows are fundamental examples for (idealized) chaotic dynamical systems. 
The study of Anosov flows in dimension three is well-connected to the study of the topology of the three-manifold carrying such flows.
For instance,  a three-manifold carrying an Anosov flow is always irreducible \cite{Palmeira}, has a fundamental group of exponential growth \cite{PlanteThurston}, and carries a tight contact structure \cite{Mitsumatsu}.

In some cases, one can classify the Anosov flows supported by a three-manifold if the manifold has a simple geometry. For instance \cite{ghys2007knots} and \cite{Barbot_Waldhausen} proved that a flow on a Seifert fibered three-manifold is equivalent to a geodesic flow up to finite covers, while \cite{PlanteSolvable} proved that an Anosov flow on a Solv manifold is a suspension of an Anosov diffeomorphism of the torus.

Although any three-manifold carrying an Anosov flow is irreducible, it may be toroidal, i.e. it can contain essential embedded tori.
When this is the case, a fundamental tool of three-dimensional topology allows one to cut the manifold along a collection of the essential tori, obtaining pieces with boundary that have better understood topological and geometric properties.

Therefore, it makes sense to cut the manifold along essential tori and analyse the resulting flows on the geometric pieces.
This is the subject of a number of seminal papers by Barbot and Fenley. In particular, they prove in \cite{barbotfenleytoroidal} that every essential torus is homotopic to one which is either transverse to the flow, or is  \emph{quasi-transverse}: It is transverse except along a finite number of periodic orbits.
The manifold can then be cut along these transverse or quasi-transverse essential tori, from which they have developed a variety of classification results for the Anosov flows on the resulting pieces.

 Essential tori have also been used in the converse direction, namely to glue pieces of Anosov flows together to produce 
new Anosov flows with surprising qualities.
See for instance the examples of Fried-Williams, who construct a non-transitive Anosov flow, and Handel-Thurston who construct an Anosov flows which is transitive, but is neither a geodesic flow nor a suspension.

A central problem in the field has been to determine the number of Anosov flows that can be supported by a single manifold.  For example, this appears as Problem 3.53 of Kirby's problem list, where he asks: Given an integer $N$, does there exist a hyperbolic $3$-manifold with at least $N$ Anosov flows which are topologically inequivalent?

The first explicit examples of manifolds supporting multiple Anosov flows were constructed in \cite{Barbot_Waldhausen}, where Barbot constructs a family of graph manifolds that each support two distinct Anosov flows.  
The surgery techniques of Goodman \cite{Goodman_Surgery} may also produce two distinct Anosov flows on a manifold if the periodic orbit used for the Dehn surgery admits two distinct purely cosmetic surgery slopes.
More recently, Bonatti, Beguin and Yu proved a general theorem allowing  them to glue many pieces with transverse toral boundaries \cite{beguin2014building}. They use this new technique to construct, for each $n >0$, a non-geometric three-manifold $M_n$ consisting of two hyperbolic pieces and one Seifert fibered piece which supports at least $n$ distinct Anosov flows. A similar result was recently obtained by Bowden and Mann using an analysis of the rigidity properties of certain fundamental group actions \cite{mannbowden}, but their manifolds $M_n$ are hyperbolic (thus answering the question from Kirby's problem list). 

A recurrent theme in these examples is that a three manifold must have increasingly ``complicated topology" in order to support an increasing number of distinct Anosov flows.  For example, in \cite{beguin2014building}, this manifests itself as a choice of increasingly complex pseudo-Anosov diffeomorphisms of a surface in order to construct certain hyperbolic pieces in the JSJ decomposition of their manifold $M_n$. 



The main result of this work is to produce new examples of three-manifolds admitting arbitrarily many non-equivalent Anosov flows, with each manifold having particularly simple topology.  For each $n>0$ we construct, via modifications to a geodesic flow, a graph manifold which supports arbitrarily many Anosov flows, no two of which are orbit equivalent. 

The manifolds we construct are therefore topologically simpler than those in the literature, in the sense that the examples of \cite{mannbowden, beguin2014building} both use hyperbolic pieces to achieve the necessary complexity to support arbitrarily many Anosov flows.  
Moreover, we use the simplest possible graph manifolds, each having only two pieces, each piece being the exterior of a trefoil. Indeed, in order to increase the number of Anosov flows supported by the manifolds arising from our construction, we need only increase the shear factor of a certain shear mapping used to identify the boundary tori of the two pieces.

Our technique is inspired by the examples of Handel-Thurston.  They begin by fixing a closed hyperbolic surface $S$, and equipping the unit tangent bundle $T^1S$ with the geodesic flow $\psi$.  Next, they choose a simple closed geodesic $g(t)$ in $S$, and cut $S$ along $g(t)$ to produce two pieces $P_1$ and $P_2$, and equip each of $N_1 = T^1P$ and $N_2 = T^1P_2$ with $\psi|N_i$.  Last, they build a gluing map $F: \partial T^1N_1 \rightarrow \partial T^1N_2$ which is distinct from the identity, but which behaves very will with respect to the flow on each piece, so that the pieces can be assembled into a new manifold $N_1 \cup_F N_2$ equipped with an Anosov flow built from the flows $\psi|N_i$. 
For any of these gluings, the resulting flow is not automatically Anosov. The difficulty is that an Anosov flow carries two invariant foliations, one uniformly (exponentially) attracting and one uniformly repelling.
The gluing usually cannot be made to preserve both these foliations, the smoothness of the flow, and the uniformity of the attraction.
Therefore, one must typically appeal to cone-field arguments to prove that the resulting flow is indeed Anosov.

Our method of proof is as follows: We consider the geodesic flow on the modular surface, which is a flow on the trefoil complement.
 We then observe that the trefoil complement admits an $k$-sheeted self-covering $p_k: M \rightarrow M$ for certain values of $k$.  Fixing a carefully chosen gluing map $F_n$ for $n>0$, and closely following the Handel-Thurston analysis of the invariant directions for the geodesic flow, we are able to find infinitely many pairs of integers $(k, m)$ with corresponding coverings such that the pullback of the geodesic flow from $M$ along $p_k$ and $p_m$ are ``compatible" with the map $F_n$.  With such a choice, the gluing arguments of Handel--Thurston show that the lifted geodesic flows may be glued to produce an Anosov flow on $M \cup_{F_n} M$.  
For distinct pairs of integers $(k,m)$ corresponding to compatible pullbacks, the resulting flows on $M \cup_{F_n} M$ are never orbit equivalent. It follows that:

\begin{theorem}\label{thm:Main}
For every natural number $n$, there exists a graph manifold $M_n$ supporting at least $n$ Anosov flows, no two of which are orbit equivalent.
\end{theorem}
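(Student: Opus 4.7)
\medskip

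The plan is to follow the blueprint sketched in the introduction and produce, for each fixed $n$, a family of at least $n$ gluings of lifted geodesic flows on the trefoil complement whose outputs are Anosov and pairwise non-orbit-equivalent. First I would set up the basic pieces. Let $M$ denote the trefoil complement, which carries the geodesic flow $\psi$ of the modular surface, and let $T = \partial M$ be its boundary torus. The periodic cusp orbit and a chosen meridian-type curve give a natural framing $(\mu,\lambda)$ of $T$. I would then record the key self-similarity: for an arithmetic sequence of positive integers $k$, there is a $k$-sheeted self-covering $p_k: M \to M$ and the pullback flow $p_k^*\psi$ is again a geodesic flow on $M$, whose behavior along $\partial M$ is the pullback of the original cusp dynamics and therefore controlled by the induced map $(p_k)_*$ on $H_1(T;\mathbb{Z})$.

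Next I would introduce the gluing family. On the torus $T$ fix a shear $F_n$ of shear factor proportional to $n$ relative to the framing $(\mu,\lambda)$, so that increasing $n$ increases the twist in the gluing. For two choices $k,m$ of covering degrees, consider the candidate manifold $M\cup_{F_n} M$ where one copy carries $p_k^*\psi$ and the other $p_m^*\psi$. The compatibility condition for the Handel--Thurston gluing to succeed is that $F_n$, viewed as a map between the two boundary tori, takes the weak stable/unstable foliations of $p_k^*\psi$ on one side to cones transverse to the corresponding foliations of $p_m^*\psi$ on the other, so that the combined flow is covered by a cone field invariant under the glued flow. I would translate this into an arithmetic condition on $(k,m,n)$ involving the action of $F_n$ on the slopes of the boundary foliations (which are determined by the covering degrees $k,m$), and then verify that for each fixed $n$ one can produce $n$ distinct admissible pairs $(k,m)$ by choosing the degrees large enough along a suitable arithmetic progression.

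Granting the compatibility and the standard Handel--Thurston cone-field argument, each admissible pair $(k,m)$ yields an Anosov flow $\phi_{k,m}^n$ on the graph manifold $M_n := M\cup_{F_n} M$ (the manifold is independent of $(k,m)$ because the pieces are the fixed $M$ and the gluing is the fixed $F_n$). It remains to distinguish these flows up to orbit equivalence, which I expect to be the main obstacle. A natural invariant to use is the set of homotopy classes of periodic orbits, or more robustly the asymptotic behavior of the length spectrum on each JSJ piece: the flow on each side is a $k$- (resp.\ $m$-)fold cover of the modular geodesic flow, so its set of primitive period lengths is determined, up to scaling, by $k$ and $m$. Any orbit equivalence must respect the canonical JSJ torus (after isotopy, by the Barbot--Fenley analysis cited above), hence must send each piece to a piece and preserve the unordered pair $\{k,m\}$ of covering degrees. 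This reduces the non-equivalence statement to showing that the unordered pairs we choose are pairwise distinct, which can be arranged by construction.

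Finally, to conclude Theorem~\ref{thm:Main} I would package the above into a counting argument: fix $n$, pick $n$ admissible pairs $(k_i,m_i)$ with pairwise distinct unordered multisets, and output the flows $\phi_{k_i,m_i}^n$ on $M_n$. The hardest technical step in practice will be the slope/cone calculation on the boundary torus that pins down exactly which triples $(k,m,n)$ are compatible; once that arithmetic is under control, both the Anosov property (via Handel--Thurston cones) and the non-equivalence (via JSJ-respecting orbit equivalences together with the covering-degree invariant) fall into place.
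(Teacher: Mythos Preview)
Your outline is essentially the paper's approach: self-covers $p_d$ of the trefoil complement, pullback geodesic flows, a fixed shear gluing $F_n$, Handel--Thurston cone fields for the Anosov property, and JSJ rigidity to distinguish the results. Two points deserve sharpening, however.

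First, the ``compatibility condition'' is not primarily a foliation/cone condition; before one can even run the cone-field argument one needs the glued flow to be a well-defined smooth flow, and for that $F_n$ must carry the closed boundary orbits of $\psi_{d}$ to those of $\psi_{d'}$ and swap the inward/outward Birkhoff annuli. In the paper this is exactly the equation $F_n^*([\mu]+k[\lambda])=[\mu]+(2n-k)[\lambda]$, which forces the admissible pairs to be $(k,2n-k)$; the cone transversality you describe is then checked afterwards and is what makes the glued flow Anosov.

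Second, the length spectrum is not an invariant of orbit equivalence (only orbits as unparametrized curves are preserved), so that first suggestion would not work. Your fallback---an orbit equivalence must respect the JSJ torus, hence restrict to an orbit equivalence of the pieces, hence preserve the unordered pair of covering degrees---is exactly what the paper does, using that the only boundary-tangent orbits are the lifts of $g$ and that $\psi_c\simeq\psi_d$ forces $c=d$ (Proposition~\ref{pro:TwoOrbits}).
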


Note that it is impossible to obtain a similar result by fixing a geodesic flow on a \emph{closed} surface and varying the covering maps, as these coverings will have different domains. In fact, in a recent result of Barbot and Fenley \cite{barbotfenley_contact}, they prove that any cover, along the fiber direction, of a unit tangent bundle can carry at most two distinct Anosov flows up to topological equivalence. Thus, it is cardinal here to consider Anosov flows on manifolds with boundary.

\subsection*{Acknowledgments}
We are grateful to Fran\c{c}ois B\'eguin, Christian Bonatti, Sergio Fenley, Kathryn Mann and Bin Yu for making comments on earlier versions of this paper.

\section{background}
\subsection{The geodesic flow}\label{sec:geodesic}
Let $S$ be a hyperbolic surface constructed as $\HH^2/\Gamma$ for some discrete subgroup $\Gamma$ of $\text{Isom}^+(\HH^2)$.
The surface $S$ thus inherits a Riemannian metric from $\HH^2$, which allows one to define the geodesic flow on $S$,
defined on the unit tangent bundle $T^1(S)\cong \PSL(\mathbb{R})/\Gamma$.

A flow $\phi^t$ on a three manifold $M$ is called \emph{Anosov} if there is a continuous $D\phi$-invariant decomposition of the tangent bundle $TM=E^s\oplus E^u\oplus E^t$ and constants $A>0$, $\lambda>1$, so that for all $t \in \mathbb{R}$:
\begin{itemize}
    \item $E^t$ is tangent to $\phi^t$ at any point $x\in M$,
    \item for any $x\in M$ and any $v\in E^u_x$,
    $||D\phi^t(v)_{\phi^t(x)}||\geq A\lambda^t||v_x||,$
    \item for any $x\in M$ and any $v\in E^s_x$,
    $||D\phi^t(v)_{\phi^t(x)}||\leq A\lambda^{-t}||v_x||.$
\end{itemize}
In this decomposition, $E^u$ is called the \emph{strong unstable} direction, and $E^s$ the \emph{strong stable} direction.

Anosov \cite{anosov_geodesic_flows_67} proved that the geodesic flow on $\HH^2$ is Anosov, and that this property descends to any hyperbolic surface $S$.  We can describe the geodesic flow, together with its stable and unstable directions, as follows:

Each point in $T^1\HH^2$ consists of a point $x\in\HH^2$ and a unit vector $v\in T_x\HH^2$. There exists a unique geodesic $g$ in $\HH^2$ passing through $x$ and tangent to $v$. Denote the emanating point of $g$ on $S^1=\partial\HH^2$ by $v^-$, and its terminating point by $v^+$. We denote the point $x$ with direction $v$ by $(x,v)$. The strong stable direction at $(x,v)$ is the direction of the horocycle passing through $x$ based at $v^+$ with a direction perpendicular to the horocycle itself at any point. The strong unstable direction is likewise given by the horocycle based at $v^-$ with a similar perpendicular direction, see Figure~\ref{fig:Horo}.

The geodesic flow moves $(x,v)$ forward along $g$ by a time $t$, while mapping the stable horocycle to another stable horocycle based at $v^+$ (exponentially contracting in $t$ the distance along the horocycle), and the unstable horocycle to an unstable horocycle based at $v^-$.
\begin{figure}[ht!]
\centering
\begin{overpic}[width=5cm]{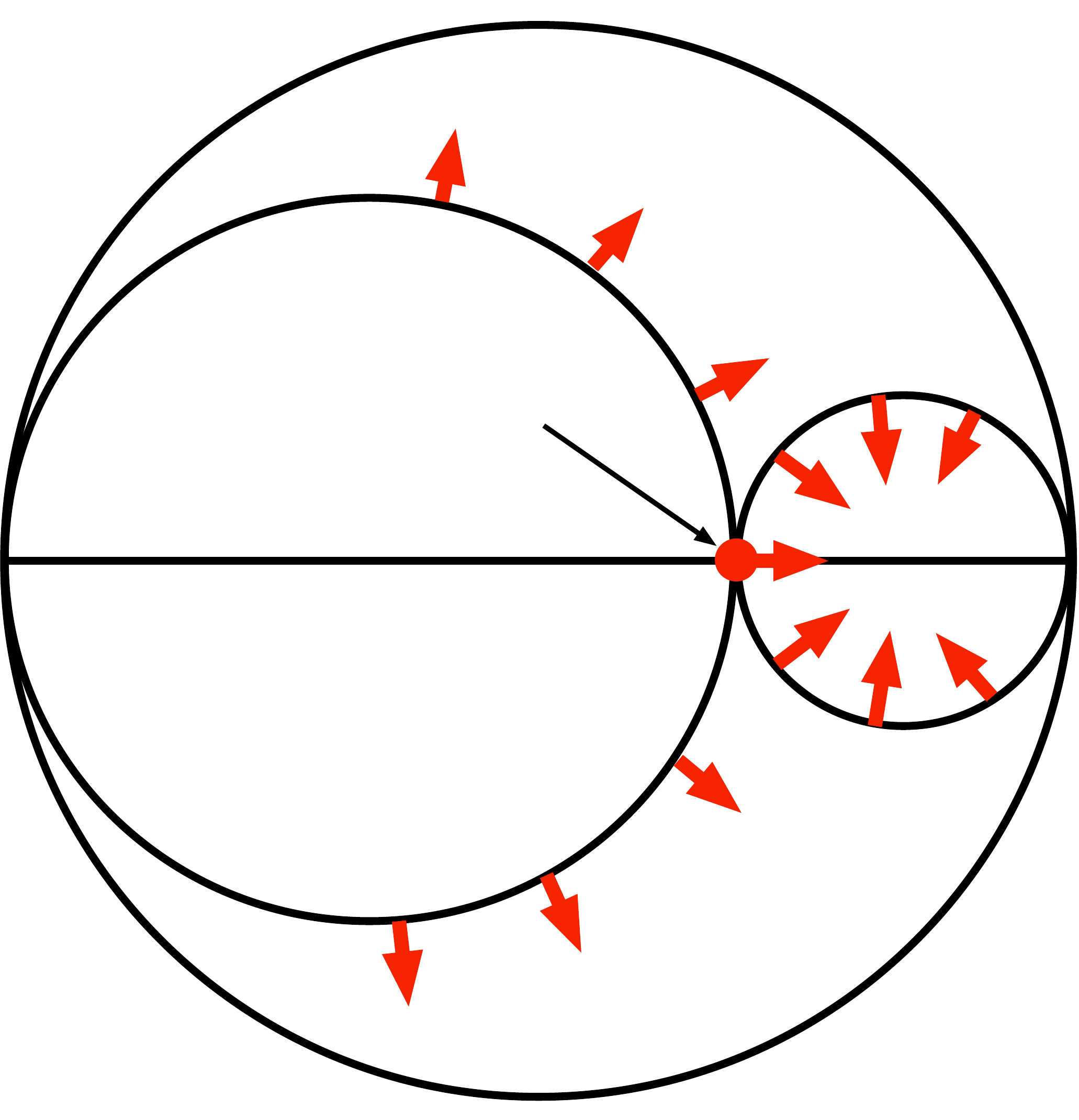}
\put(-10,48){$v^-$}
\put(100,48){$v^+$}
\put(32,63){$(x,v)$}
\put(46,26){${E}^u$}
\put(78,67){${E}^s$}
\end{overpic}
\caption{A point $(x,v)\in T^1\HH^2$ together with the horocycles determining the stable and unstable manifolds at $(x,v)$.}\label{fig:Horo}
\end{figure} 

In order to perform the gluings in Section~\ref{sec:Gluing}, we need to identify the directions of the Anosov decomposition $TM=E^s\oplus E^u\oplus E^t$ for $M=T^1\HH^2$, and later determine these directions upon passing to a quotient $T^1S$ for a particular choice of surface $S$. 
To this end, we analyze the action of the flow on $T(T^1\HH^2)$ by first introducing a coordinate system used in \cite{HandelThurston}.

Fix a point $x \in\HH^2$ and $v\in T^1_x\HH^2$.
Following \cite{HandelThurston}, we identify 
$T_{(x,v)}(T^1\HH^2)$ with $T_x\HH^2\times\R$ as follows:
At any point $y\in\HH^2$ we define the angle $\sphericalangle(w,u)$ between any two vectors $w,u\in T_y^1\HH^2$ to be the angle measured counterclockwise from $w$ to $u$.
Given a vector $w\in T_x\HH^2$ and a real number $\rho\in\R$, define $c_{w,\rho}(t)=(\exp(tw),u(\alpha+t\rho)\,)$ where $\alpha=\sphericalangle(w, v)$ and $u(\alpha+t\rho)\in T^1_{\exp(tw)}\HH^2$ is the vector satisfying $\sphericalangle(T\exp(tw), u(\alpha+t\rho))=\alpha+t\rho$ (see Figure~\ref{fig:Path}). The path $c(t): = c_{w, \rho}(t)$ in $T^1\HH^2$ satisfies $c(0)=(x,v)$, and we identify the point $(w, \rho) \in T_x\HH^2 \times \R$ with $\left.\frac{dc}{dt}\right|_{t=0}$ in $T_{(x,v)}(T^1\HH^2)$. 

\begin{figure}[ht]
\centering
\begin{overpic}[width=8cm, trim=0 0 0 -60, clip]{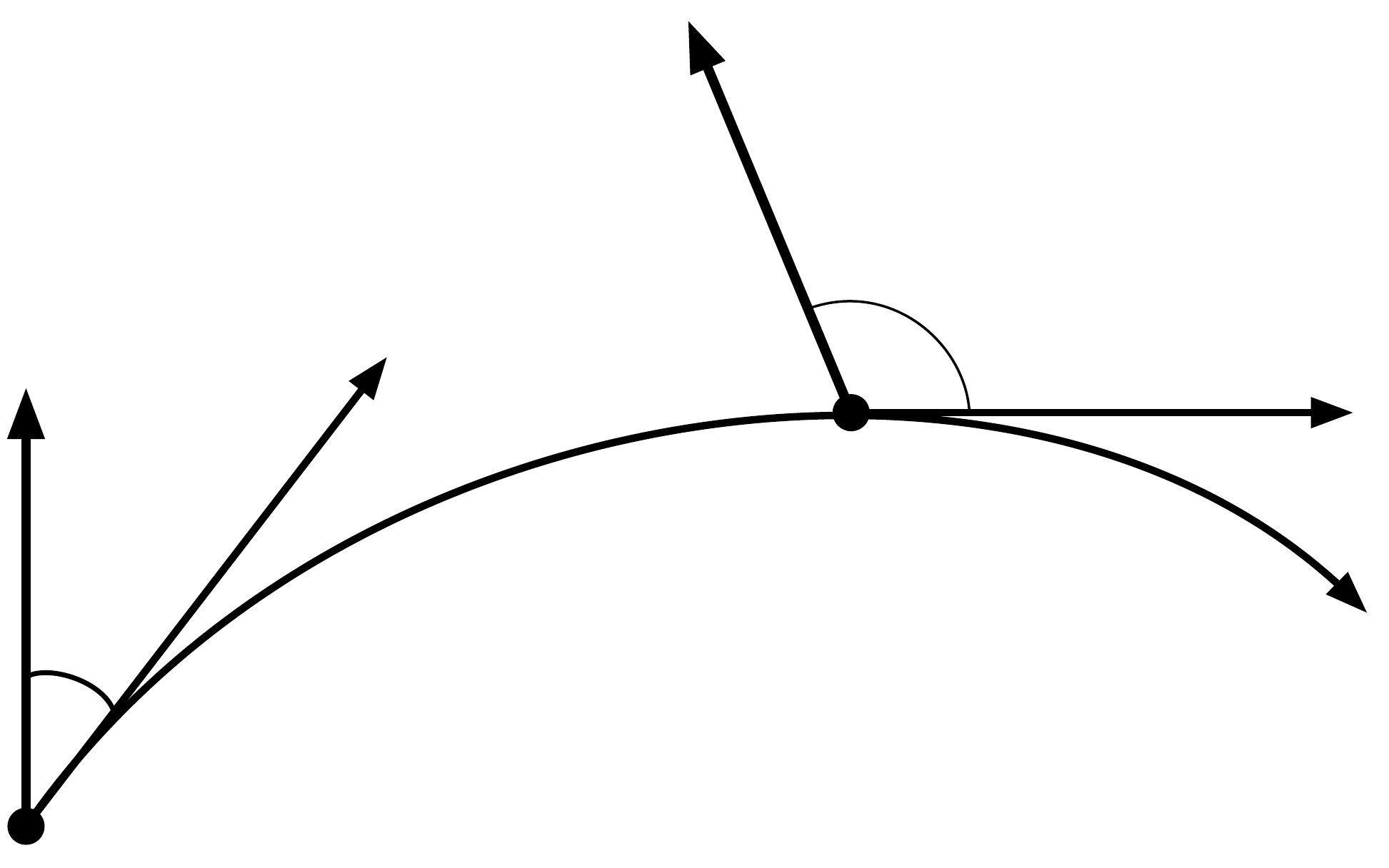}
\put(0,-2){$x$}
\put(90,12){$\exp(wt)$}
\put(27,38){$w$}
\put(0,36){$v$}
\put(38,63){$u(\alpha+t\rho)$}
\put(100,32){$T\exp(tw)$}
\put(5,16){$\alpha$}
\put(70,40){$\alpha+t\rho$}
\end{overpic}
\caption{The path $c(t)$ in $T^1\HH^2$.}\label{fig:Path}
\end{figure}

For any point $(y,w)\in T^1\HH^2$, define the vector $w^\perp$ to be the vector in $T_y^1\HH^2$ satisfying $\sphericalangle(w^\perp,w)=\pi/2$.
Consider again Figure~\ref{fig:Horo} showing the Anosov directions. The strong stable and unstable manifolds are both tangent to $v^\perp$ (that is, both are pointing downwards from $x$ in the figure). For the unstable manifold (that is, the horocycle on the left), the direction vector at a point rotates clockwise as one proceeds along the horocycle, while for the stable manifold the rotation is counterclockwise.  Thus in our coordinate system:
\begin{itemize}
    \item $E_{(x,v)}^t$ is generated by $(v,0)$,
    \item $E_{(x,v)}^u$ is generated by $(v^\perp,-1)$,
    \item $E_{(x,v)}^s$ is generated by $(v^\perp,1)$.
\end{itemize}

In particular, the directions in the $(v^\perp,\rho)$ plane corresponding to the unstable manifold lie in the second and fourth quadrants.  We will need this information for the gluing arguments in Section \ref{sec:Gluing}. 

\begin{remark}\label{rem:Trivializing}  Fixing a geodesic $g$ in $\HH^2$, we can consider the unit tangent bundle to $g$. At any point $(x,v)\in T^1g$,
parallel transport along $g$ preserves the angle $\alpha=\sphericalangle(Tg,v)$. One may identify $T^1g$ with $\R\times\R/2\pi$ by $(t,\theta)\mapsto(g(t),v)$ where $v$ is the vector in $T^1_{g(t)}\HH^2$ satisfying $\sphericalangle(Tg,v)=\theta$. In these coordinates the angles of the Anosov directions above do not depend on the point t, but only on the angle $\alpha$.
\end{remark}

\subsection{The modular surface}\label{sec:modular}
Let $\Gamma$ denote the group $\PSL(\mathbb{Z})$, with generating set
$$\left\{ \begin{pmatrix}
0 &1\\
-1& 0
\end{pmatrix},
\begin{pmatrix}
0 &-1\\
1& 1
\end{pmatrix} \right\}.$$  
In the  standard representation of  $\text{Isom}^+(\HH^2)$ as $\PSL(\R)$ acting by M\"obius transformations, these generators of $\Gamma \subset \PSL(\R)$  act respectively as rotation by $\pi$ about the point $p=i$, and rotation by $2\pi/3$ about the point $q=\frac{1}{2}+\frac{\sqrt{3}}{2}i$.
The modular surface $\Mod$ is the hyperbolic orbifold $\Mod=\HH^2/\Gamma$, a surface with one cusp at infinity and two cone points, one of order 2 and one of order 3.

Increasing the distance between the centers of rotations $p$ and $q$ in $\HH^2$ as in \cite{ghys2007knots} turns the cusp into a funnel. There is then a unique closed geodesic around the funnel, that we denote by $g$. The greater the distance between $p$ and $q$, the longer the geodesic $g$. We cut the surface along $g$ to obtain a compactification of $\Mod$ which is a surface with a boundary. The resulting surface $\M$ is depicted in Figure~\ref{fig:Modular}.

\begin{figure}[ht]
\centering
\begin{overpic}[width=4cm]{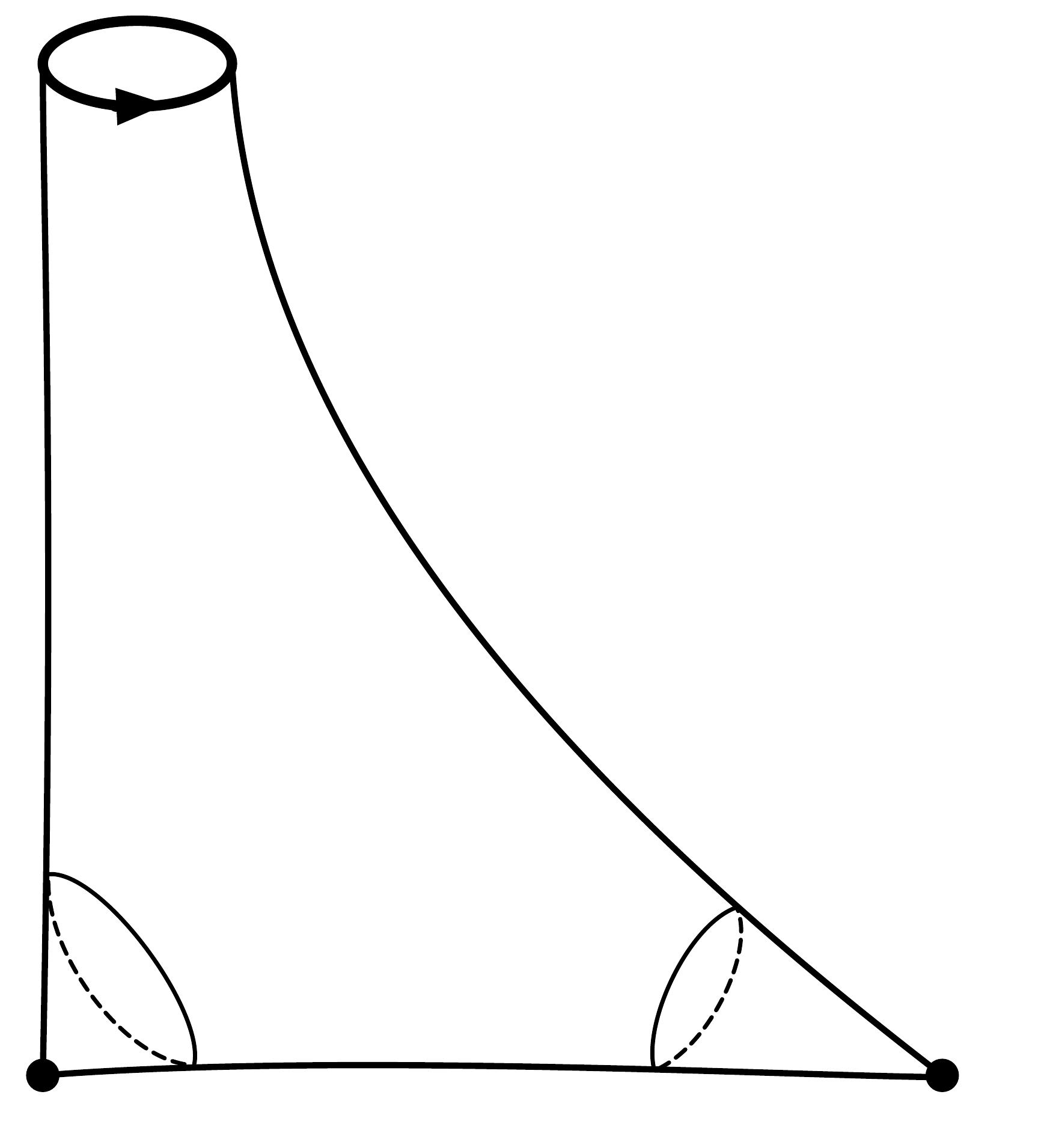}
\put(-2,0){$2$}
\put(86,0){$3$}
\put(10,80){$g$}
\end{overpic}
\caption{The compactified modular surface, $\M$.}\label{fig:Modular}
\end{figure}

The unit tangent bundle $T^1\M$ is the bundle of all $(x,v)$ where $x\in \M$ and $v\in T_x\M$ satisfies $||v||=1$.
Consider the boundary torus $\partial T^1\M=T^1g$. 
We may trivialize the fiber direction along this torus using the tangent to the geodesic $g$ at each point as a section, as in Remark~\ref{rem:Trivializing}. If $g$ has length $L$, this results in the coordinates $[0,L]\times[0,2\pi]$ on the boundary torus, $(s,\theta)\mapsto(g(s),v)$, where $g$ is parametrized by arc-length, and $v\in T^1_{g(s)} \M$ satisfies $\sphericalangle(Tg,v)=\theta$.
We call these the orbit-fiber coordinates. As the geodesic flow is structurally stable, it follows that it is independent of the length $L$ of the boundary geodesic. Hence, we may choose the length to be equal to $2\pi$ so that $0\leq s\leq 2\pi$.

It is uncommon to define Anosov flows on a manifolds with boundary, so for ease of exposition we avoid introducing such a definition and instead introduce our objects of study as pieces of an Anosov flow on a closed $3$-manifold cut along essential tori. 

Consider the surface in Figure \ref{fig:Double}, which is a sphere with four cone points, two of order $2$ and two of order $3$, denoted by $S_{2,2,3,3}$. It is the double of the modular surface $\M$ and is also a hyperbolic surface. As in Section~\ref{sec:geodesic}, there is a geodesic flow over $S_{2,2,3,3}$, which is an Anosov flow defined on $T^1 S_{2,2,3,3}$. Cutting the three manifold $T^1 S_{2,2,3,3}$ along the two dimensional torus $T^1g$ as in Figure \ref{fig:Double} one obtains two copies of $T^1\M$, and the geodesic flow on $\M$ is thus a piece of an Anosov flow on a closed manifold as in \cite{barbot_fenley_free}.

\begin{figure}[ht]
\centering
\begin{overpic}[width=7cm]{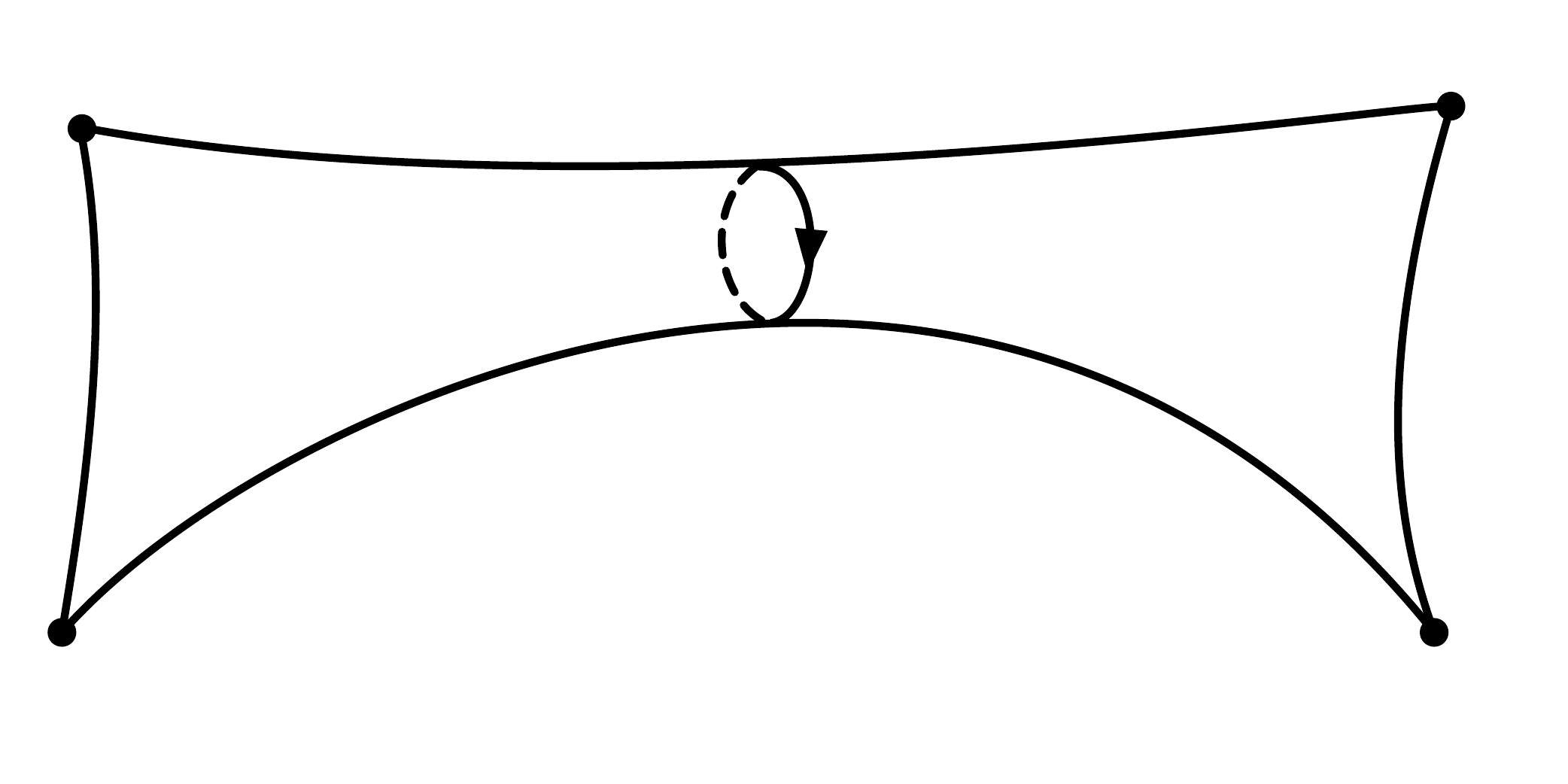}
\put(0,5){$3$}
\put(0,43){$2$}
\put(93,5){$3$}
\put(93,45){$2$}
\end{overpic}
\caption{The double $S_{2,2,3,3}$ of $\M$ with the curve $g$.}\label{fig:Double}
\end{figure}

We may use the coordinate system defined in Section~\ref{sec:geodesic} for $T(T^1\M)$ to identify the Anosov directions over the boundary torus $\partial T^1\M$. 
In our coordinate system $(s,\theta)$ corresponds to the point $(g(s),v)$ where $v$ is the rotation $R_\theta$ by $\theta$ of $Tg$ along the fiber at the point $g(s)$. In particular,
The flow direction at $(g(s),v)$ is generated by $(v,0)=(R_\theta(Tg),0)$.
The strong unstable direction $E_{(g(s),v)}^u$ is generated by $(v^\perp,-1)$, where $v^\perp$ is the direction in the plane perpendicular to $v$, and the unstable direction
 $E_{(x,v)}^s$ is generated by the same vector and slope $+1$,  $(v^\perp,1)$.
Thus, the Anosov directions are all rotations of the directions along $g$, all by the same angle $\theta$ in a horizontal (tangent to the surface) direction.
In particular, the Anosov directions depend solely on $\theta$ and not on $s$. 
These directions are the same for any hyperbolic surface $\HH^2/\Gamma$, and $g$ a closed geodesic in $S$.

\subsection{The trefoil complement}
In this section we will show how to identify $T^1\M$ with the trefoil complement, and will identify the closed orbit on the boundary of $T^1\M$ with a curve on the boundary of the trefoil complement expressed in meridan/longitude coordinates.

 The trefoil complement and its Seifert fibration can be described as follows.  There is an action of $S^1$ on $S^3$ given by $\lambda \cdot (z_1, z_2)$ = $(z_1 \lambda^2, z_2 \lambda^3)$ for all $\lambda \in S^1$.  Each orbit of this action is a trefoil knot lying on one of the tori $\{(z_1, z_2) \mid |z_1|^2=r\}_{0<r<1}$, except for the two orbits $$S^1 \times \{0\}=\{(z_1,z_2)\mid |z_1|=1, z_2=0\}  
\mbox{ and } \{0\} \times S^1=\{(z_1,z_2)\mid z_1=0, |z_2|=1\}.$$
\begin{figure}[ht]
\centering
\begin{overpic}[width=8cm]{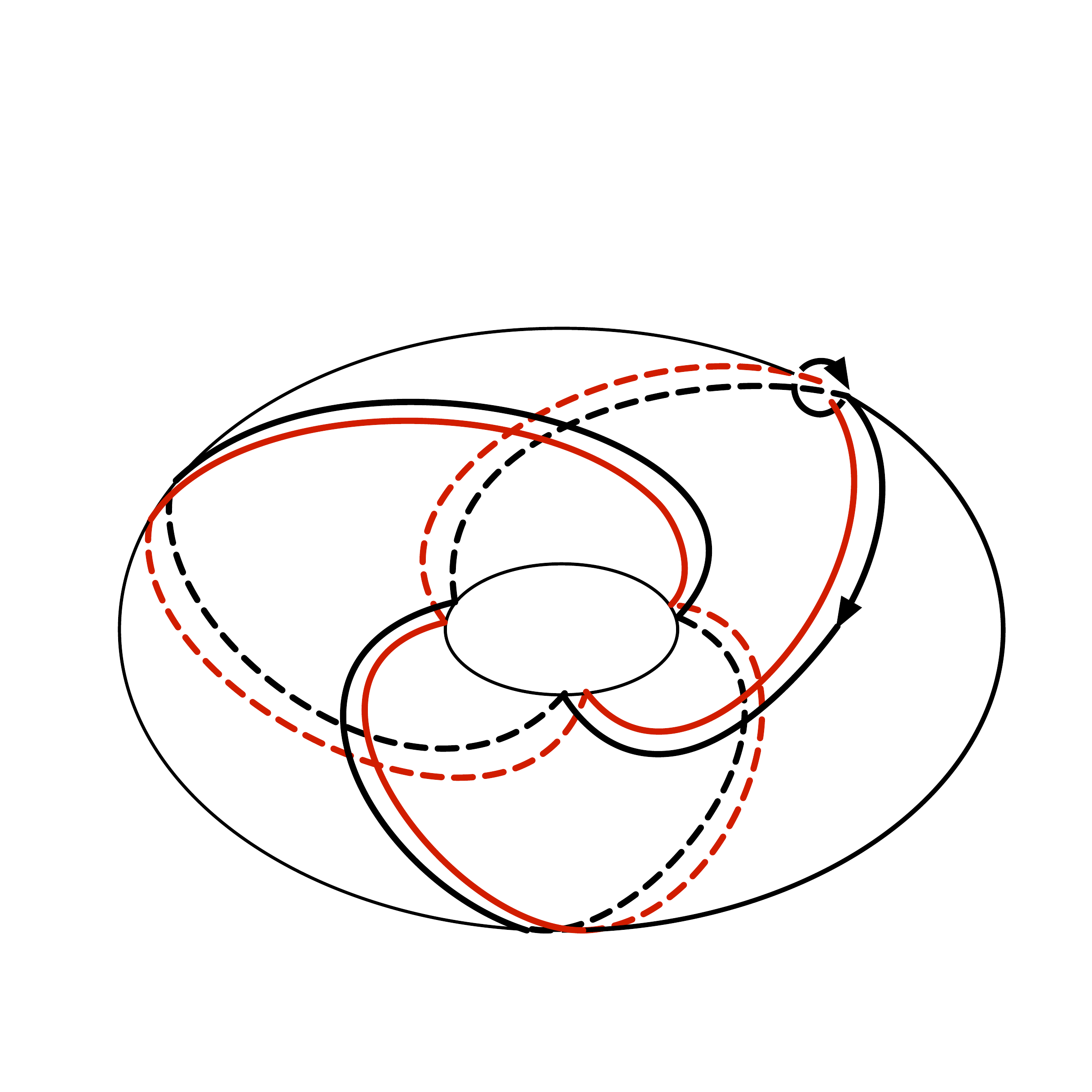}
\put(78,68){$\mu$}
\put(79,40){$h$}
\end{overpic}
\caption{The complement of the trefoil knot.}\label{fig:Trefoil}
\end{figure}
Fixing a choice of orbit $T_{2,3} \subset T=\{ (z_1, z_2) \mid |z_1|^2 = |z_2|^2\}$, we may remove an open neighbourhood of $T_{2,3}$ from $S^3$ consisting of a union of fibers (orbits) to get  $M = S^3\setminus \NN (T_{2,3})$, a compact manifold with boundary, as in Figure \ref{fig:Trefoil}.  The decomposition of $M$ into orbits under the $S^1$ action gives $M$ the structure of a Seifert fiber space, where the orbit $S^1 \times \{0\}$ admits a fibered torus neighbourhood with invariants $(2,1)$ and $\{0\} \times S^1$ admits a fibered torus neighbourhood with invariants $(3,1)$.  The orbit surface for this fibration, i.e. the space obtained by collapsing each fiber to a point, is a copy of $S^2$ minus a disk, with two cone points of orders $2$ and $3$, i.e. it is the surface $\M$.

The trefoil complement also has a fibration over $S^1$: Consider the punctured torus $\text{T}_0$ composed of two disks in the solid torus $\{(z_1, z_2) \mid |z_1|^2 \leq |z_2|^2 \}$ connected by three half twisted bands in the solid torus $\{(z_1, z_2) \mid |z_1|^2 \geq |z_2|^2 \}$ (see \cite{Rolfsenknots} for more details, including an explicit parameterization of this surface).  On the boundary torus $\partial M$, we can choose a basis composed of the meridian $\mu$ of the torus $\NN (T_{2,3})$, and a regular fiber $h$ of the Seifert fibration (see Figure~\ref{fig:Trefoil}). The longitude $\lambda$ is the boundary of the Seifert surface  $\partial \text{T}_0\subset \partial M$. 

The Seifert fibers intersect the surface $\text{T}_0$ transversely, with each regular fiber intersecting it six times.  We assume that these intersections happen at regular intervals along the fiber (that is, if $\lambda, \lambda' \in S^1$ are such that $\lambda \cdot (z_1, z_2) = \lambda'\cdot (z_1', z_2')$ for $(z_1, z_2), (z_1', z_2') \in \text{T}_0$, then $\lambda^{-1} \lambda' = e^{2k \pi i / 6}$ 
for some $k \in \{0, \ldots, 5\} $).  As such, every point in $M$ can be written uniquely as $\lambda \cdot (z_1, z_2)$ for $(z_2, z_2) \in \text{T}_0$ and $\lambda \in S^1$ with $0 \leq \arg(\lambda) < \pi/3$. The singular fibers intersect the surface $2$ and $3$ times respectively.
Sliding $\text{T}_0$ along the fibers rotates $\text{T}_0$ through $S^3$ until it returns to its initial position as a set---in other words, $e^{2\pi i/6} \cdot \text{T}_0 = \text{T}_0$. Note this will shift each intersection point of a regular Seifert fiber with $\text{T}_0$ to the next intersection point along the fiber, permuting cyclically the two disks on the outside, and permuting the three bands on the inside. This yields a map $\varphi : \text{T}_0 \rightarrow \text{T}_0$ given by $\varphi(x) = e^{2\pi i/6} \cdot x$ for all $x \in \text{T}_0$.

As each Seifert fiber has $2$, $3$ or $6$ intersection points with the punctured torus, $\varphi^6=\text{id}$.  Moreover, $\varphi$ serves as the generator of the group of deck transformations for the natural branched covering map $p : \text{T}_0 \rightarrow \M$ as in Figure~\ref{fig:Cover}.

\begin{figure}[ht]
\centering
\begin{overpic}[width=9cm]{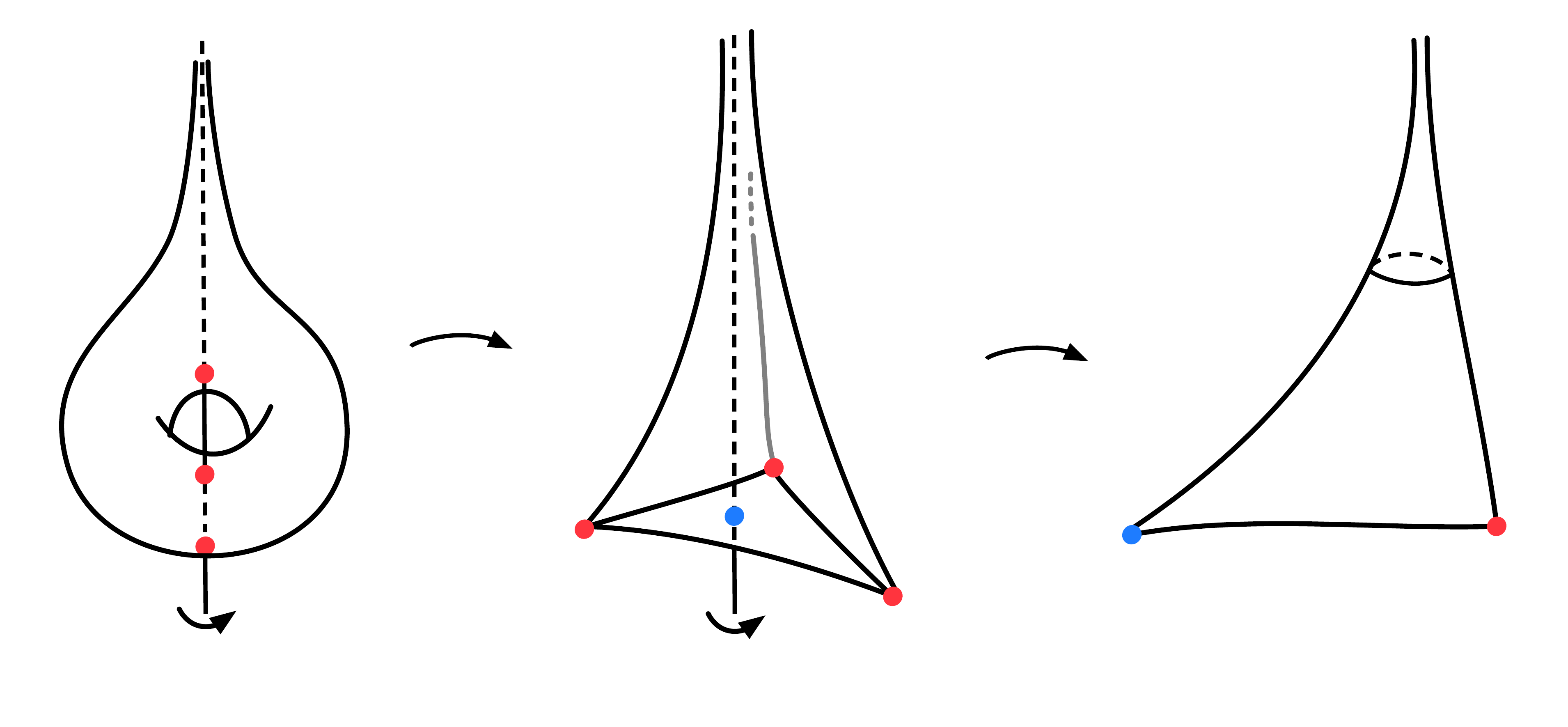}
\put(10,0){$\pi$}
\put(43,0){$2\pi/3$}
\end{overpic}
\caption{The cover $p:\text{T}_0 \rightarrow \Mod$, here $\text{T}_0$ is identified with the surface $T^2 \setminus D^2$ for ease of illustration.}\label{fig:Cover}
\end{figure}

The unit tangent bundle to $T^1\M$ is composed of two unit tangent bundles, each one a unit tangent bundle to a closed neighbourhood of a cone point, glued along the unit tangent bundle to a segment. It is thus homeomorphic to the (compactified) trefoil complement $M$ \cite{milnor1968singularities}.

\begin{proposition}[Ghys]
\label{orbit_slope}
The identification of $T^1 \M$ with the trefoil complement $M$ carries the closed orbit of the geodesic flow on $\partial T^1 \M$ to the curve $\mu \subset \partial M$.
\end{proposition}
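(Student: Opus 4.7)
The plan is to match $\gamma$ (the closed orbit of the geodesic flow on $\partial T^1\M$) with $\mu$ (the meridian of $T_{2,3}$) by working through the Seifert fibration structure on both sides. The identification $T^1\M \cong M$ preserves the Seifert fibration, since both sides are Seifert fibered over the orbifold $\M$ with the same singular data (multiplicities $2$ and $3$), so it in particular maps a Seifert fiber of $\partial T^1\M$ to the regular Seifert fiber $h$ of $\partial M$.

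First, I would verify that both curves are sections of the Seifert fibration restricted to the boundary torus. In the orbit-fiber coordinates $(s,\theta)$ of Section~\ref{sec:geodesic}, the closed orbit is $\gamma = \{\theta=0\}$ (the unit tangent vector $g'(s)$ at each point), which meets each Seifert fiber $\{s=s_0\}$ in exactly one point. On the trefoil side, $\mu$ bounds a meridian disk of $\NN(T_{2,3})$ to which the nearby regular Seifert fibers (which are $(2,3)$-cables of $T_{2,3}$) are transverse, intersecting the disk once. Consequently, under the identification, $\gamma$ and $\mu$ are both sections of the Seifert fibration over $g$, and so they agree up to a multiple of the regular Seifert fiber: $\gamma$ is isotopic to $\mu + k h$ on $\partial M$ for some $k \in \mathbb{Z}$, where $h = 6\mu + \lambda$ in meridian-longitude coordinates (the standard Seifert fiber formula for the $(2,3)$-torus knot exterior).

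To show $k = 0$, I would appeal to the explicit Quillen identification $T^1 \Mod \cong \PSL(\mathbb{Z}) \backslash \PSL(\mathbb{R}) \cong S^3 \setminus T_{2,3}$ exploited throughout \cite{ghys2007knots}. Under this identification, the parabolic translation $T = \bigl(\begin{smallmatrix} 1 & 1 \\ 0 & 1 \end{smallmatrix}\bigr) \in \PSL(\mathbb{Z})$ generates the peripheral subgroup of the cusp of $\Mod$ and is sent to the meridian $\mu$ of the trefoil. When $\Mod$ is deformed by pulling apart the centers of rotation $p$ and $q$ (as in the passage just before the proposition), the cusp becomes a funnel and $T$ becomes a hyperbolic element whose axis projects to the closed geodesic $g$; correspondingly, the closed orbit $\gamma$ on $\partial T^1\M$ realizes the same free homotopy class in $\pi_1(T^1\M) = \pi_1(M)$ as the original horocyclic loop around the cusp, namely $\mu$.

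The main obstacle is precisely this last matching: showing that the peripheral slope of $\gamma$ is $\mu$ rather than $\mu + kh$ for some nonzero $k$. The Seifert fibration combinatorics alone leave a $\mathbb{Z}$-worth of ambiguity (one multiple of the regular fiber), and resolving it requires genuinely geometric input about the identification $T^1\M \cong M$. An alternative route would be to compute the linking number of $\gamma$ with $T_{2,3}$ in $S^3$ directly—which would pin down the $\mu$-coefficient as $1+6k$—but this requires essentially the same modular-surface input as the Quillen identification.
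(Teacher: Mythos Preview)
Your reduction to a fiber ambiguity is sound: both $\gamma$ and $\mu$ are sections of the Seifert fibration restricted to $\partial M$, so $[\gamma] = [\mu] + k[h]$ for some integer $k$. But your resolution of $k$ is essentially circular. You invoke the Quillen identification and assert that the parabolic $T$ is carried to the meridian $\mu$; that statement, however, \emph{is} the proposition (up to the deformation from cusp to geodesic boundary). The map $\pi_1(T^1\Mod) \to \pi_1^{\mathrm{orb}}(\Mod) \cong \PSL(\mathbb{Z})$ has kernel generated by the fiber class $h$, so knowing that $\gamma$ projects to the parabolic $T$ only tells you $[\gamma] \equiv [\mu] \pmod{h}$, which you already had. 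Your deformation argument does not resolve this either: the horocycle-tangent loop and the geodesic-tangent loop are indeed freely homotopic in $T^1\M$, but identifying the horocycle-tangent loop with $\mu$ under Quillen's map is exactly the computation you have not done. You correctly diagnose this in your final paragraph, but you do not close the gap.

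The paper's proof takes a different and self-contained route. It lifts the problem to the six-fold branched cover $p:\text{T}_0 \to \M$ by the Seifert surface, where $T^1\text{T}_0$ is trivial. There one can write down explicitly a curve $\beta$ (the tangent framing of $\partial \text{T}_0$) that covers $\gamma$ six times, and compare it to a section $\bar\lambda$ covering $\lambda$ and a fiber $\bar h$ covering $h$. A rotation-number count shows $[\beta] = [\bar h] + [\bar\lambda]$, hence $6[\gamma] = [h] + [\lambda]$ downstairs. Combined with the linking-number relation $[h] = \pm[\lambda] \pm 6[\mu]$, this forces $[\gamma] = \pm[\mu]$ with no residual fiber ambiguity. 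The point is that passing to the punctured-torus cover trivializes the tangent bundle and turns the question into an elementary winding-number computation, which is precisely the ``genuinely geometric input'' you were looking for.
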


\begin{proof}
The proof is given in \cite{ghys2007knots}, and we include it here for completeness. The covering map $p$ induces a cover $P :T^1 \text{T}_0  \rightarrow M\cong T^1(\M)$ and $\varphi$ induces $\Phi: T^1 \text{T}_0 \rightarrow T^1F$. 
As the unit tangent bundle $T^1 \text{T}_0$ is trivial, it has a section, say $s$.  Set $\bar{\lambda} = s( \partial \text{T}_0)$, then we can assume $\bar{\lambda}$ appears
as in Figure \ref{fig:Longitude}, where the black arrows indicate the direction of the nonsingular vector field defined by $s$. The images $\Phi^k \circ s$ are disjoint for $k \in \{0, \ldots, 5\} $. Thus, the image under $P$ of any such section is a copy of $\text{T}_0$ embedded in the trefoil complement  $M$ with $\partial \text{T}_0 \subset \partial M$. Hence, we may identify the image of $P \circ s$ with the Seifert surface for the trefoil, and in particular, choosing the orientation for $\lambda$ accordingly, $P(\bar{\lambda})=\lambda$. 

We can also identify a curve $\bar{h}$ in $\partial(T^1 \text{T}_0)$ that covers a regular fiber $h$ of the Seifert fibration of $M$.  Fixing $x \in \partial \text{T}_0$, set $\bar{h} = \{(x, v) \mid ||v||=1 \}$, oriented so that counterclockwise rotation of the vector $v$ is the positive direction along $\bar{h}$.  Choosing an appropriate orientation for $h$, we get $P(\bar{h}) = h$.

Next, we define a curve $\beta$ representing the class $[\bar{h}] + [\bar{\lambda}] \in H^1(\partial T^1 \text{T}_0)$ by first supposing that $\partial \text{T}_0$ is identified with a curve $\alpha:[0,L] \rightarrow \text{T}_0$, and choosing a point $t_0 \in [0,L]$ for which the section is a direction tangent to the curve, i.e.  $s(\alpha(t_0)) = (\alpha(t_0), \frac{T\alpha(t_0)}{||T\alpha(t_0)||})$.  
Supposing the longitude is traveled counterclockwise as well, as in Figure \ref{fig:Longitude}, 
such a point corresponds to the rightmost point of the curve $\bar\lambda$ in Figure \ref{fig:Longitude}, and reparameterizing if necessary, we assume $t_0= 0$.  Our curve $\beta: [0,L] \rightarrow T^1 \text{T}_0$ is $\alpha$ with its tangent direction, $\beta(t) = (\alpha(t), \frac{T\alpha(t)}{||T\alpha(t)||})$.  To see that $[\beta] =[\bar{h}]+ [\bar{\lambda}]$, note that if $\bar{\lambda}(t) = s \circ \alpha(t)= (\alpha(t), v_t)$ for some unit vector $v_t \in T_{\alpha(t)} \text{T}_0$, then 
$\sphericalangle(v_t, T \alpha(t)) = \frac{2 \pi t}{L}$.  Thus the vector $v_t$ makes one complete rotation relative to $T \alpha(t)$ as $t$ ranges over $[0,L]$, so that $\beta(t) \sim \bar{\lambda} \circ \bar{h}$.

\begin{figure}[ht]
\centering
\begin{overpic}[width=6cm]{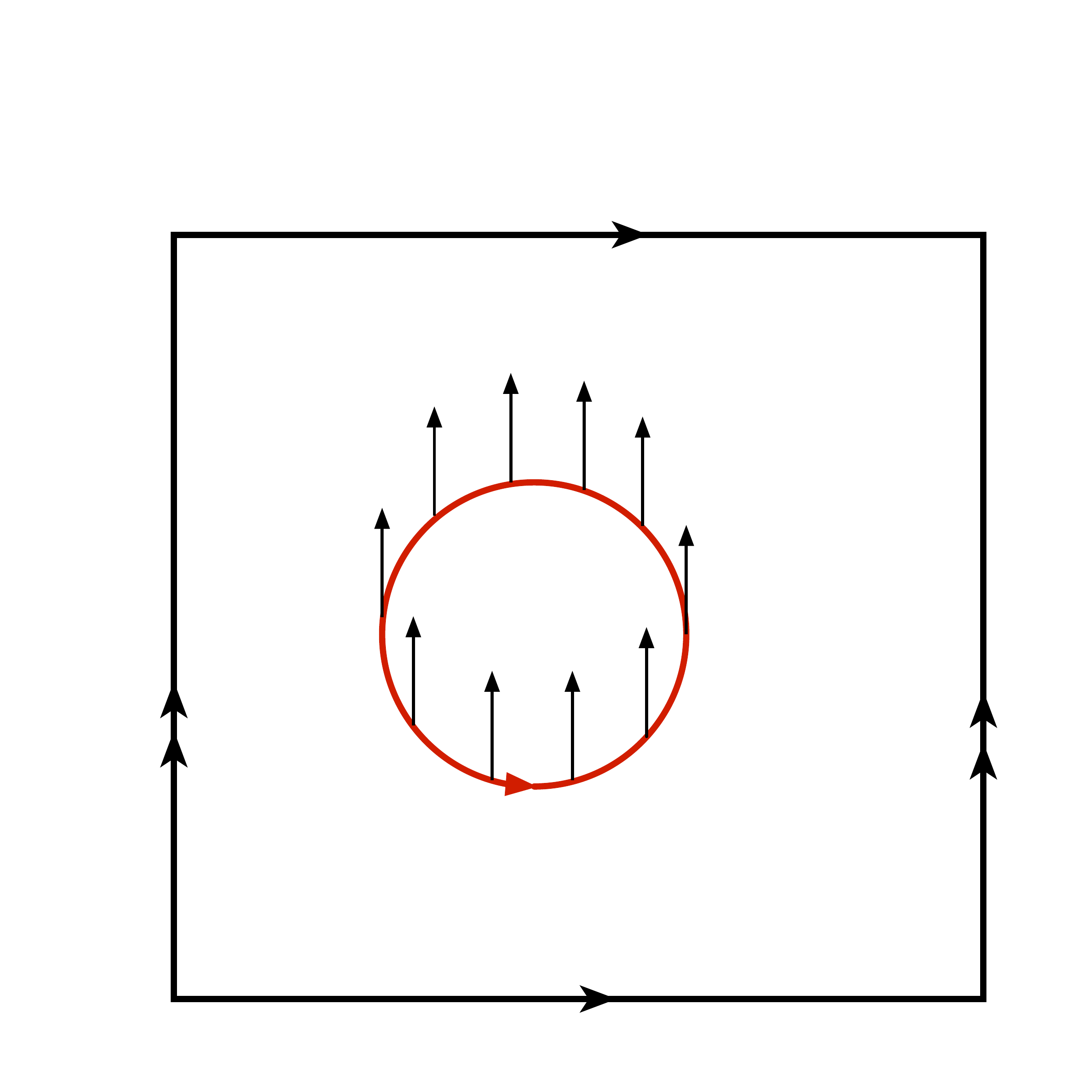}
\put(43,16){$\bar\lambda$}
\end{overpic}
\caption{The curve $\bar\lambda$ on $\partial T^1 \text{T}_0$ in red, as the curve $\partial \text{T}_0$ embedded into $T^1 \text{T}_0$ by assigning a direction to each point.}\label{fig:Longitude}
\end{figure}

Considering Figure \ref{fig:Cover}, we see that the cover $p:\text{T}_0 \rightarrow \Mod$ wraps $\partial \text{T}_0$ six times around the boundary curve $g$ of $\M$, and thus $P(\beta) = (g, \frac{Tg}{||Tg||})$, where $\beta$ wraps six times around its image under $P$.  On the other hand, from Figure~\ref{fig:Trefoil} it follows that the fiber $h$ has linking number $6$ with the trefoil, hence $[h]=\pm[\lambda]\pm6[\mu]$.
However our choices of orientations for $\lambda$ and $h$ yield $[\beta] = [\bar{\lambda}] + [\bar{h}]$, so $[P(\beta)] = [\lambda] + [h] = \pm 6 [\mu]$.

Therefore, the closed orbit $(g, \frac{Tg}{||Tg||})\subset \partial T^1\M$ is exactly the curve you Dehn fill to get $S^3$ (c.f. \cite{ghys2007knots}).
\end{proof}

\vskip .5cm
\section{Lifting}
\label{sec:Covers}

With $M$ as in the previous section, recall that the $S^1$ action which determined the Seifert fibration allowed every point in $M$ to be written uniquely as $\lambda\cdot (z_1, z_2)$ for $(z_2, z_2) \in \text{T}_0$ and $\lambda \in S^1$ with $0 \leq \arg(\lambda) < \pi/3$.  Then for each $d = \pm 1 + 6k$ with $k \in \mathbb{N}_{>0}$ we can construct a $d$-fold covering map $p_d : M \rightarrow M$ as $p_d(\lambda \cdot (z_1, z_2)) = (z_1 \lambda^{2d}, z_2 \lambda^{3d})$ if $d = 1+6k$ and $p_d(\lambda \cdot (z_1, z_2)) = (z_1 \lambda^{-2d}, z_2 \lambda^{-3d})$ if $d = -1+6k$.  From this description, it is clear that under the cover $p_d$ each regular fiber of the Seifert fibration upstairs wraps $d$ times around each regular fiber downstairs (either preserving or reversing orientation depending on the degree of the cover).

This same covering map can also be described in terms of mapping cylinders.  Recall that $M \cong (\text{T}_0 \times [0,1])/ \sim$ , where $(x, 0) \sim (\varphi(x), 1)$ for all $x \in \text{T}_0$ and $\varphi$ is of order $6$.  When $d =1 + 6k$, consider the $d$-fold cyclic cover constructed from $d$ ``puzzle pieces", each homeomorphic to $M$ cut open along a Seifert surface as in \cite{Rolfsenknots}. The resulting manifold is homeomorphic to the mapping cylinder with respect to 
$\varphi^d$. As $\varphi^6=\text{id}$, $\varphi^d = \varphi^{1}$ and the cover  manifold is again homeomorphic to $M$. It wraps, via the covering map $p_d$, $d$ times around itself.  It follows from this description that the preimage $p_d^{-1}(\lambda) = p_d^{-1}(\partial \text{T}_0)$  is $d$ disjoint copies of $\lambda$.

\vskip.5cm

These descriptions and the observations about the action of $p_d$ on the fiber $h$ and the longitude $\lambda$ are sufficient to completely describe the behaviour of the maps $p_d$ upon restriction to the boundary torus $\partial M$.

\begin{lemma}
\label{peripheral map}
Let $M$ be the complement of the trefoil knot in $S^3$, and $p_d: M \rightarrow M$ denote a covering map of order
 $d = 1 + 6k$,
 where $k \in \mathbb{N}$. 
 Then the preimage of the curve $\mu$ under the map $p_d$ is a single curve, of homology  $[\mu] + k [\lambda]$.
 \end{lemma}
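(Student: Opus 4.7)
The plan is to reduce the problem to a computation on $H_1(\partial M)\cong\mathbb{Z}^2$ in the basis $\{[\mu],[\lambda]\}$, exploiting what the paragraphs preceding the lemma already tell us about $(p_d)_*$ on two other natural classes. Each regular Seifert fiber upstairs maps $d$-to-$1$ onto a regular fiber downstairs, so $(p_d)_*[\bar h]=d[h]$; and $p_d^{-1}(\lambda)$ consists of $d$ disjoint copies of $\lambda$ each mapped homeomorphically, so $(p_d)_*[\bar\lambda]=[\lambda]$. (I use $\bar{\phantom{x}}$ to denote the corresponding class upstairs; since $p_d$ is a self-cover the boundary torus is literally $\partial M$ on both sides.)

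I would then combine these with the linking relation $[h]=6[\mu]-[\lambda]$ derived in the proof of Proposition~\ref{orbit_slope} (the same relation holds upstairs for $[\bar h]$). Substituting $[\bar h]=6[\bar\mu]-[\bar\lambda]$ into $(p_d)_*[\bar h]=d(6[\mu]-[\lambda])$, isolating $(p_d)_*[\bar\mu]$, and using $d-1=6k$ gives $(p_d)_*[\bar\mu]=d[\mu]-k[\lambda]$. Thus in the basis $\{[\mu],[\lambda]\}$ the boundary map is
\begin{equation*}
A=\begin{pmatrix} d & 0 \\ -k & 1 \end{pmatrix}.
\end{equation*}

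Finally I would read off $p_d^{-1}(\mu)$ directly from $A$. Any component $C$ of the preimage is a simple closed curve whose homology class $a[\bar\mu]+b[\bar\lambda]$ is sent by $A$ to $e_C[\mu]$, where $e_C$ is the degree of $p_d|_C$, and $\sum_C e_C=d$. Solving $A(a,b)^T=(e,0)^T$ forces $e=da$ and $b=ka$, so the minimum positive value of $e$ is $e=d$, realised at $(a,b)=(1,k)$. A single component therefore already accounts for the entire covering degree, forcing the preimage to be connected with homology class $[\bar\mu]+k[\bar\lambda]$, which is the claim.

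The main obstacle I anticipate is sign bookkeeping: the exact signs in $[h]=\pm6[\mu]\mp[\lambda]$ depend on orientation conventions for $\mu$, $\lambda$, and $h$, and an inconsistent choice will flip the sign of $k$ in the final answer. Once the orientations set up in Proposition~\ref{orbit_slope} are tracked carefully, the remaining argument reduces to the short linear-algebra calculation over $\mathbb{Z}$ carried out above.
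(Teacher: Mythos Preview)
Your proof is correct and follows essentially the same line as the paper's: both use the known behaviour of $p_d$ on $h$ and on $\lambda$ together with the relation $[h]=6[\mu]-[\lambda]$ to determine what happens to $[\mu]$. The differences are only in packaging. You compute the pushforward matrix $A$ of $(p_d)_*$ and then solve $A(a,b)^T=(e,0)^T$, whereas the paper works directly with the transfer (preimage) map, computing $(p_d^*)^{-1}(6[\mu])=(p_d^*)^{-1}([\lambda])+(p_d^*)^{-1}([h])=d[\lambda]+[h]=6[\mu]+6k[\lambda]$. For connectedness, the paper instead counts the $d$ points of $p_d^{-1}(\mu\cap\lambda)$ against the $d$ parallel components of $p_d^{-1}(\lambda)$; your degree-counting argument (each component already has covering degree $d$, and the degrees sum to $d$) is arguably the cleaner of the two.
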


\begin{proof}
From the above descriptions of the covering map $p_d$, the induced map $p_d^{*}:\pi_1(\partial M) \rightarrow \pi_1(\partial M)$
 satisfies
$(p_d^{*})^{-1}([h]) = [h]$ and $(p_d^{*})^{-1}([\lambda]) = d [\lambda]$.
Thus, as $[h] = 6[\mu]  - [\lambda]$:
\[ 
(p_d^{*})^{-1}(6[\mu]) = (p_d^{*})^{-1}([\lambda] ) + (p_d^{*})^{-1}([h])   = d [\lambda] +[h] = 6[\mu]+(d-1)[\lambda]
\]
so that  $(p_d^{*})^{-1}(6[\mu])=6[\mu]+6k[\lambda]$ and
\[
(p_d^{*})^{-1}([\mu])=[\mu]+k[\lambda]. \]

What remains to be shown is that $p_d^{-1}(\mu)$ consists of  single curve.
We know that $p_d^{-1}(\lambda)$ consists of $d$ disjoint (parallel) curves, and we can assume that $\mu \cap \lambda$ is a single point. 
It follows that $|p_d^{-1}(\lambda\cap\mu)|=d$ and $\mu$ must be covered by a single curve intersecting each of the $d$ components of $p_d^{-1}(\lambda)$ exactly once.
\end{proof}

\begin{proposition}\label{pro:TwoOrbits}
Suppose $d = 1  + 6k$ and let $\psi_d$ denote the pullback of the geodesic flow along the covering map $p_{d}: M \rightarrow M$.  Then $\psi_d$ has exactly two periodic orbits of slope $[\mu] + k [\lambda]$.  Moreover, if $m$ and $n$ are positive integers with $m, n \equiv 1 \mod 6$, then $\psi_n$ and $\psi_m$ are orbit equivalent if and only if $n=m$.
\end{proposition}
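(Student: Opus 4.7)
The plan is to handle the two claims separately. For the count of boundary periodic orbits of slope $[\mu]+k[\lambda]$, I would first observe that $\partial T^1\M=T^1g$ contains exactly two periodic orbits of the geodesic flow, namely the two oriented lifts of $g$ corresponding to the tangent directions $\pm Tg$. By Proposition \ref{orbit_slope}, both have slope $[\mu]$ on $\partial M$. Since $\psi_d$ is the pullback of the geodesic flow, its periodic orbits on $\partial M$ are precisely the connected components of the $p_d$-preimages of these two orbits. The argument proving Lemma \ref{peripheral map} depends only on the homology class of the downstairs curve (not on $\mu$ specifically), so it applies to any slope-$[\mu]$ curve on $\partial M$. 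Hence each of the two geodesic-flow orbits lifts to a single connected curve of slope $[\mu]+k[\lambda]$, yielding exactly two periodic orbits of $\psi_d$ on $\partial M$, both of the claimed slope.

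For the classification, let $h:M\to M$ be an orbit equivalence between $\psi_n$ and $\psi_m$, and set $k_n=(n-1)/6$, $k_m=(m-1)/6$. Since $\partial M$ is a topological invariant of $M$, $h$ restricts to a self-homeomorphism of $\partial M$; and because orbit equivalences carry periodic orbits to periodic orbits, $h$ sends the pair of boundary periodic orbits of $\psi_n$ onto those of $\psi_m$. On $H_1(\partial M)$ this gives
\[ h_*([\mu]+k_n[\lambda])=\pm([\mu]+k_m[\lambda]). \]

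To exploit this, I would constrain $h_*$ on $H_1(\partial M)$. The class $[\lambda]$ generates the kernel of the inclusion-induced map $H_1(\partial M)\to H_1(M)$, so $h_*([\lambda])=\epsilon_2[\lambda]$ with $\epsilon_2\in\{\pm 1\}$. The class $[\mu]$ is preserved up to sign because $\mu$ is the unique slope whose Dehn filling of $M$ yields $S^3$: filling $M$ along the slope $h(\mu)$ is homeomorphic, via $h$, to filling $M$ along $\mu$, which is $S^3$, so $h_*([\mu])=\epsilon_1[\mu]$ with $\epsilon_1\in\{\pm 1\}$. Substituting these into the displayed equation gives $\epsilon_1[\mu]+\epsilon_2 k_n[\lambda]=\pm([\mu]+k_m[\lambda])$; since $k_n,k_m\geq 0$, comparing coefficients forces $k_n=k_m$, hence $n=m$. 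The reverse implication is immediate.

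The main obstacle is the meridian constraint $h_*([\mu])=\pm[\mu]$; without it the displayed equation admits many solutions, because one could add any multiple of $[\lambda]$ to $[\mu]$ without affecting the image in $H_1(M)$. I would make this step explicit via uniqueness of the $S^3$-Dehn filling of the trefoil complement (a consequence of Gordon-Luecke, or directly from the classification of exceptional surgeries on torus knots).
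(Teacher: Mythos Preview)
Your argument for the first claim is essentially the paper's, just more fully spelled out: both invoke Proposition~\ref{orbit_slope} and Lemma~\ref{peripheral map}, and you make explicit that there are two tangent orbits (corresponding to $\pm Tg$) and that the proof of Lemma~\ref{peripheral map} applies to any curve in the class $[\mu]$.

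For the second claim you take a genuinely different route. The paper appeals to the mapping class group of the trefoil exterior: the symmetry group of $(S^3,T_{2,3})$ is $\mathbb{Z}_2$, so any self-homeomorphism of $M$ is isotopic either to the identity or to an involution inducing $[\mu]\mapsto -[\mu]$, $[\lambda]\mapsto -[\lambda]$ on $H_1(\partial M)$; hence the slope $[\mu]+k[\lambda]$ is preserved up to sign, forcing $k_n=k_m$. You instead pin down $h_*$ on $H_1(\partial M)$ via two characterisations that are intrinsic to $M$: $[\lambda]$ spans $\ker\big(H_1(\partial M)\to H_1(M)\big)$, and $[\mu]$ is the unique $S^3$-filling slope. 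This avoids quoting the mapping class group computation, at the cost of invoking Gordon--Luecke (or, as you note, the easier classification of exceptional surgeries on torus knots). The paper's route yields the sharper statement $\epsilon_1=\epsilon_2$, but your coefficient comparison shows this is unnecessary: from $\epsilon_1[\mu]+\epsilon_2 k_n[\lambda]=\pm([\mu]+k_m[\lambda])$ one gets $|k_n|=|k_m|$ regardless of whether $\epsilon_1=\epsilon_2$, and nonnegativity finishes it.
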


\begin{proof}
That $\psi_d$ has exactly two periodic orbits of the given slope follows from Lemma \ref{peripheral map}, and Proposition \ref{orbit_slope}.

Now suppose that $H :M \rightarrow M$ is a diffeomorphism carrying the orbits of $\psi_m$ to the orbits of $\psi_n$. As the symmetry group of the pair $(S^3, T_{p,q})$ is $\mathbb{Z}_2$, if $H$ is not isotopic to the identity then it is isotopic to a map that generates the symmetry group.  Such a map induces, up to conjugation, the homomorphism $\phi : \langle a, b \mid a^3 =b^2\rangle \rightarrow \langle a, b \mid a^3 =b^2\rangle$ given by $\phi(a) = a^{-1}, \phi(b) = b^{-1}$. 

The homomorphism $\phi$ is conjugate to a homomorphism whose restriction to the peripheral subgroup $\pi_1(\partial M)$ has action $[\mu] \mapsto -[\mu]$ and $[\lambda] \mapsto -[\lambda]$.  As such, a closed orbit $\mu + k \lambda$ will be carried by $H$ to a closed orbit isotopic to $-\mu - k \lambda$, meaning that the closed orbits of $\psi_n$ and $\psi_m$ are never identified by $H$ unless $m = n$.
 \end{proof}

\begin{remark}\label{rem:fiber}  
It follows that there is a choice of longitude $\lambda$ that intersects the orbit at a single point for any cover of degree $d=1+6k$ where $k\in\mathbb{N}_{>0}$.  Thus we can always choose the longitude $[\lambda]$ and a periodic orbit $g_d$ such that $[g_d]$ is a basis for $H_1(\partial M)$. In this basis we have, by the equation in the proof of Lemma \ref{peripheral map}, $
[h]=-d[\lambda]+6[g_d].
$
 \end{remark}

\section{Gluing pieces}\label{sec:Gluing} 
In this section we complete the proof of Theorem \ref{thm:Main}.  For each $n\in\mathbb{N}$ we begin by defining the map $F_n:\partial M \rightarrow \partial M$ as a diffeomorphism inducing the following map on the first homology of $\partial M$:
\[ F_n^*([\mu]) = [\mu]+2n[\lambda], \mbox{ and } F_n([\lambda]) = -[\lambda].
\]

\begin{theorem}\label{thm:Glue}
The manifold $M_n=M\cup_{F_{n}} M$ 
supports $n+1$ Anosov flows, no two of which are orbit equivalent.
\end{theorem}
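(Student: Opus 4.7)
The plan is to build, for each $k \in \{0, 1, \dots, n\}$, an Anosov flow $\Psi_k$ on $M_n$ by gluing the pullback flow $\psi_{1+6k}$ on one copy of $M$ to $\psi_{1+6(2n-k)}$ on the other copy via $F_n$, and then to show that the $n+1$ resulting flows are pairwise not orbit equivalent. The first task is to verify that $F_n$ matches the periodic boundary orbits on the two sides. By Proposition~\ref{pro:TwoOrbits}, the closed boundary orbits of $\psi_{1+6k}$ have slope $[\mu]+k[\lambda]$ and those of $\psi_{1+6(2n-k)}$ have slope $[\mu]+(2n-k)[\lambda]$; the action of $F_n^{*}$ gives
\[
F_n^{*}\bigl([\mu]+k[\lambda]\bigr) \;=\; [\mu]+2n[\lambda]-k[\lambda] \;=\; [\mu]+(2n-k)[\lambda],
\]
so $F_n$ carries the periodic boundary orbits of one side to those of the other. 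After a small isotopy of $F_n$ supported in a neighbourhood of these orbits, we obtain a well-defined quasi-transverse flow $\Psi_k$ on $M_n$.

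Next I would prove that $\Psi_k$ is Anosov by the Handel-Thurston cone-field argument. Each $\psi_{1+6d}$ is Anosov on the interior of its copy of $M$, being the pullback of an Anosov flow by a local diffeomorphism, so it suffices to check that the gluing preserves suitable stable and unstable cone fields along the JSJ torus. By Section~\ref{sec:modular}, the strong stable and unstable directions on the boundary of $T^{1}\M$ depend only on the fiber angle $\theta$, with slopes $+1$ and $-1$ respectively in the orbit-fiber coordinates of Remark~\ref{rem:Trivializing}; after pullback the same description holds on each side, expressed in its own orbit-fiber basis. In the common $([\mu],[\lambda])$-basis, $F_n$ acts by the constant shear
\[
DF_n \;=\; \begin{pmatrix} 1 & 0 \\ 2n & -1 \end{pmatrix},
\]
and I would compose this with the two orbit-fiber-to-$([\mu],[\lambda])$ changes of basis to verify, at each value of $\theta$, that the resulting linear map sends the unstable cone on one side strictly inside the unstable cone on the other, and similarly for the stable cone. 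Together with the hyperbolicity of each $\psi_{1+6d}$ in the interior, this is the classical sufficient condition for $\Psi_k$ to be Anosov. This cone-field computation is the main technical obstacle: the orbit-fiber bases on the two sides disagree whenever $k \neq 2n-k$, and one needs the shear of $F_n$ to dominate this discrepancy uniformly in $\theta$.

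Finally, for $0 \le k < k' \le n$, I would rule out orbit equivalence between $\Psi_k$ and $\Psi_{k'}$. Any orbit equivalence $H: M_n \to M_n$ must preserve the JSJ decomposition, hence restricts to self-homeomorphisms of each copy of $M$, possibly after swapping the two sides. By the symmetry argument in the proof of Proposition~\ref{pro:TwoOrbits}, each such self-homeomorphism acts on peripheral homology as $\pm\mathrm{id}$; consequently $H$ can only carry the unordered slope pair $\{[\mu]+k[\lambda],\,[\mu]+(2n-k)[\lambda]\}$ to $\{[\mu]+k'[\lambda],\,[\mu]+(2n-k')[\lambda]\}$ when these pairs already coincide. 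Combined with the restriction $0 \le k, k' \le n$, this forces $k = k'$, completing the proof.
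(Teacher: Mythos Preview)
Your outline follows the paper's approach closely, but two steps need more care.

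First, matching slopes is not enough to define $\Psi_k$: you must also check that $F_n$ sends the outgoing Birkhoff annulus $A^d_{\mathrm{out}}$ of $\psi_d$ to the incoming annulus $A^{d'}_{\mathrm{in}}$ of $\psi_{d'}$ (and vice versa), so that the glued vector field actually crosses the torus rather than pointing the same way on both sides. This follows from $F_n^*([\lambda]) = -[\lambda]$, but it has to be said. Relatedly, your heuristic that ``the shear of $F_n$ must dominate the discrepancy'' is the wrong picture: the Handel--Thurston cone condition here is a sign check, not a magnitude estimate. The relevant computation is that the image of the fiber direction under $DF_n$ lands in the second and fourth quadrants of the target orbit--fiber coordinates for \emph{every} pair $d,d'>0$; the size of $n$ plays no role in making $\Psi_k$ Anosov, and no ``domination'' is needed.

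Second, and more substantively, in the inequivalence argument you pass directly from ``$H$ preserves the JSJ decomposition'' to ``$H$ restricts to self-homeomorphisms of each copy of $M$'' and then compare peripheral slopes. Uniqueness of the JSJ torus only tells you that $H(T)$ is isotopic to $T$; it does not by itself produce orbit equivalences of the restricted flows, which is what Proposition~\ref{pro:TwoOrbits} needs. The missing step is that $H(T)$ is a Birkhoff torus for $\Psi_{k'}$, and one must argue that its tangent periodic orbits are exactly those of $T$ (because the only closed orbits of the geodesic flow isotopic into $\partial M$ are $g^{\pm 1}$, and this persists under all the self-covers $p_d$), after which $H(T)$ can be isotoped to $T$ along the transverse flowlines. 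Only then does $H$ genuinely restrict to orbit equivalences $(M,\psi_d)\to(M,\psi_c)$ on the pieces, and Proposition~\ref{pro:TwoOrbits} forces $\{d,d'\}=\{c,c'\}$.
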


\begin{proof}
For the geodesic flow $\psi$, we may switch from the orbit-fiber coordinates $(t,\theta)$ to orbit-longitude coordinates $(t,\tau)$ where $0\leq\tau\leq2\pi$ is an arc-length parametrisation of the longitude $\lambda$, normalized to have length $2\pi$. 
Recall that the Anosov directions are independent of $t$, and they are a horizontal rotation by $\theta$ of the directions at $\theta=0$. 
Thus, they are also independent of $t$ in the $(t,\tau)$ coordinates, and consist of a rotation by $\tau$ of the directions at points with $\tau=0$.

Next consider the flow $\psi_d$ corresponding to a self-cover of the trefoil complement of degree $d=1+ 6k$. By the previous section, a tangent orbit $g_d$ is a curve of homology $[\mu] + k[\lambda]$ so that $\{[g_d], [\lambda]\}$ is a basis for the homology of the boundary torus.
The curve $g_d$ is $d$ times the length of the boundary geodesic $g_1$ of $\M$. 
Since the geodesic flow on $M$ is independent of the length of $g_1$ (see Section~\ref{sec:modular}), we may choose its length to be $L=2\pi/d$, so that an arc-length parameter for $g_d$ ranges from 0 to $2\pi$ for any $d$.

 Fix $n > 0$.  For each $k$ with $0 \leq k \leq 2n$, the map $F_n$ is chosen so that
 \[ F_n^*([g_d]) = F_n^*([\mu] + k [\lambda]) = [\mu] + (2n-k) [\lambda] = [g_{d'}]
 \]
 where $d' = 1+6(2n-k)$. In other words, $F_n$ carries any periodic orbit $g_d$ of the flow $\psi_{1+ 6k}$ to a curve isotopic to a periodic orbit $g_{d'}$ of the flow $\psi_{1+ 6(2n-k)}$, with the same orientation. 

At the same time, the periodic orbits of $\psi_d$  for any $d$ divide $\partial M$ into two Birkhoff annuli, $A^d_{out}$ and $A^d_{in}$, where the flow $\psi_d$ points outward from $M$ and inward respectively. The longitude $\lambda$ intersects $g_d$ transversely precisely once for any $d$, pointing in the direction opposite the fiber, i.e. into $A^{d'}_{in}$ and $A^{d}_{in}$. Since $F_n(\lambda) = -\lambda$, we see that $F_n(A^d_{out}) = A^{d'}_{in}$ and $F_n(A^d_{in}) = A^{d'}_{out}$.
Moreover, the normal direction to $\partial M$ is unchanged by the gluing, i.e. the $(v,v^\perp)$ plane in one copy of $M$ is matched with the $(v,v^\perp)$ plane in the other.

We define the flow $\Psi_k$ to be the smooth flow resulting from gluing $\psi_{1+ 6k}$ to $\psi_{1+ 6(2n-k)}$ via $F_n$.


Although the orbit-fiber coordinates are now not a basis for the homology of the boundary torus, as the regular fiber intersects an orbit $g_d$ multiple times when $d>1$, the coordinates $(v,v^\perp,\rho)$ are still a local basis for the unit tangent bundle at each point.

By Remark \ref{rem:fiber}, the fiber $h$ has the form $[h]=-d[\lambda_2]+6[g_{d}]$. It is thus mapped to $[F_n(h)]=d[\lambda_1]-6[g_d']$ for the  corresponding degree $d'$.
In particular, a fiber is not glued to a fiber and the resulting manifold is not globally Seifert fibered but is a graph manifold.

As $d'[\lambda]=6[g_d']-[h]$, the slope of $\lambda$ is $(\frac{6}{d'}, -\frac{1}{d'})$ in the local orbit-fiber directions on the boundary torus for $d'=1+6(2n-k)$.
Thus, $[F_n(h)]=-d[\lambda]-6[g_d']$ is of slope $(\frac{-6d}{d'}-6,\frac{d}{d'})$.
Therefore, the image of the fiber direction $DF(\rho)$ is always tangent to the boundary torus, and is in the 
second and fourth quadrant in the (local) orbit-fiber basis.

This direction falls in the second and fourth quadrant in the $(v^\perp,\rho)$ plane along the core curve of the 
Birkhoff annulus $A_{out}^d$. 
In the 
second Birkhoff annulus $A_{in}^d$, one flows inward to get back to the original copy, and thus the gluing is performed via $F_n^{-1}$.


\vskip .5cm
\begin{claim} The flow $\Psi_k$ is Anosov for any $k$.
\end{claim}

This follows immediately from \cite[Propositions 3 and 4]{HandelThurston}, and we give the idea of the proof here for sake of completeness.
We first show there exist two continuous plane fields, $F^u$ and $F^s$, that intersect along the direction $F^t$ tangent to the flow $\Psi_k$, and are invariant under $D\Psi_k$.

Consider at each point $x$ of $M_n$ the union of the second and fourth quadrants in the $(v^{\perp},h)$ plane, product with the $v$ direction. That is, the portion of the tangent space $T_xM_n$ consisting of the two infinite wedges
\[
\mathcal{W}^u=\big\{(av+bv^\perp,c)\,|\, bc\leq0\big\}.
\]
Within each piece, the action of $D\Psi_k$ on the tangent directions is given by the action of $\psi_d$ and $\psi_{d'}$. Under these actions, both $v^\perp$ and $h$ are moved towards the unstable direction, and thus into the interiors of the second and fourth quadrants.
When passing through the gluing, one uses the action of $DF$ when passing from the back copy to the front one, and the action of $DF^{-1}$ when passing from the front copy to the back one.
In both of these cases, by the computation above the claim, $v^\perp$ is invariant, while $h$ is mapped into the interior of the second quadrant.
It follows that
\[
D\Psi_k^t(\mathcal{W}^u_x)\subset \mathcal{W}^u_{\Psi_k^t(x)}.
\]

By fixing any point $x\in N$ and considering
$D\Psi_k^n(\mathcal{W}^u_{\Psi_k^{-n}(x)})$, these is a sequence of closed sets (each corresponding to a closed set of possible slopes in the $(v^\perp,h)$ plane), that are each contained in all its prequels. Thus, there is an invariant plane field $W^{u}_x=(av+bv^\perp,-\eta(x)b\,)_x\subset T_xN$ at any point $x$ with some finite slope function $\eta(x)>0$. This plane field is the weak stable direction.

The continuity of $\eta$ and $W^u$ follows from the continuity of the $D\Psi_k^t$ action.
By the same argument applied to the action of $\Psi_k^{-t}$ on the first and third quadrants, a continuous invariant plane field $W^{s}$ exists as well.

Next we show there exist continuous $D\Psi_k$ invariant line fields $E^{u}\subset W^u$ and $E^{s}\subset W^s$ which together with $E^t$ yield the Anosov directions for $\Psi_k^t$. We again use a cone field argument.

Any vector in the plane field $W^u$ can be expressed as $(av+bv^\perp,-\eta(x)b\,)_x$. A vector $(bv^\perp,-\eta(x)b\,)_x$ in the intersection of the plane field with the $(v^\perp,\rho)$ plane is shifted along the $v$ direction some bounded distance, as $\rho$ is shifted. As one increases the $v$ component, so $|a|$ becomes larger, the amount of the shift decreases, as both $v$ and $v^\perp$ are invariant under the gluing.

On the other hand, within each piece, the flow takes a vector $(av+bv^\perp,-\eta(x)b\,)_x$ exponentially fast towards the unstable direction $(v^\perp,-1)$ in the intersection of the $(v^\perp,\rho)$ plane and the plane field (as we already know the plane field is invariant).
It follows that for such a vector with $|a|$ large enough, its image under  $D\Psi_k$ is a vector with $|a|$ exponentially decreasing. 
Therefore, we can define the cone field within $W^u$:

\[
\mathcal{C}^u=\big\{\big(a(x)v+bv^\perp,-\eta(x)b\,\big)_x\,|\, b>0\big\}.
\]
Here, $a(x)$ is a function that is a large enough constant $a_0$ on most of $M_n$, decreasing on each piece when approaching $A_{out}^d$, and we can fix this decrease to be slower than the decrease induced by  $D\Psi_0$, but still small enough at the annulus, so that the image under the gluing satisfies $|a|<a_0$. 

It follows that the cone field is invariant under the flow
\[
D\Psi_k^t(\mathcal{C}^u_x)\subset \mathcal{C}^u_{\Psi_k^t(x)},
\]

and there exist invariant line fields $E^u$ and $E^s$.
Thus, the resulting flow is indeed Anosov, as the stable and unstable directions are globally defined.


\vskip .5cm
\begin{claim}
The flows $\Psi_k$ and $\Psi_m$, where $0\leq k,m \leq 2n$ are conjugate only if $m=k$ or $m = 2n-k$.
\end{claim}

Considering the manifold $M_n=M\cup_{F_n} M$, since $F$ does not map a regular fiber in one copy of $M$ to the regular fiber in the second copy, $M_n$ is a graph manifold that is not a Seifert fibered manifold. Thus, its JSJ torus is unique up to isotopy \cite{JacoShalen1979, johannson}. 

Suppose that $H$ were a self-homeomorphism of $M_n$, carrying $\Psi_k$ to $\Psi_m$, demonstrating that they are topologically equivalent flows.  For ease of exposition set $c = 1+ 6m$ and $c' = 1+6(2n-m)$ so that $\Psi_m$ is the result of gluing $\psi_c$ to $\psi_{c'}$.

Let $T\subset N$ be the embedding of $T^1\partial \M$ into $N$. By construction, $T$ is a Birkhoff torus for $\Psi_k$. 
The homeomorphism $H$ takes $T$ to a Birkhoff torus $H(T)$ for $\Psi_m$. At the same time, $T$ is also a Birkhoff torus for $\Psi_m$ and they are isotopic by the uniqueness of the JSJ torus. As the only tangent orbits in the geodesic flow that can be isotoped into the boundary $\partial M$ are $g$ and $g^{-1}$, this is also true for all of its covers. Thus, $T$ and $H(T)$ share the same tangent orbits, and $H(T)$ can be isotoped to $T$ along the flowlines transverse to $T$.

This implies that the two pieces of $M_n \setminus H(T)$, equipped with $\Psi_m$, are orbit equivalent to $(M,\psi_c)\cup(M,\psi_{c'})$. 
Thus, $H$ carries $\{(M,\psi_d),(M, \psi_{d'})\}$ to $\{(M, \psi_c),(M, \psi_{c'})\}$ and it follows from Proposition~\ref{pro:TwoOrbits} that 
$\{d,d'\}=\{c,c'\}$ and therefore either $k=m$ or $k = 2n-m$.
\end{proof}

\begin{remark}
 Handel--Thurston point out in \cite{HandelThurston} that one can take covers of the geodesic flows in the pieces, which for them are denoted by $N_i$:   ``The family we have described can be enlarged by taking the $n$-fold cover of $N_i$ ($n$ independent of $i$) corresponding to the $S^1$ fiber, before gluing the $N_i$ together." One might ask if this could lead to a generalization of our constructions presented here.  However it is not clear which basis to use in the general setting, and how to control the manifold obtained by gluing (the orbit-fiber coordinates do not work for this setting, as the orbit and the fiber necessarily intersect multiple times in the cover).
 \end{remark}
 \begin{remark}
 It is possible to greatly generalize our techniques. Any orbifold or surface with boundary will have a unit tangent bundle that is a suspension with a periodic (or trivial) monodromy, and thus will cover itself in infinitely many different ways, with some covers being orientation reversing. For any hyperbolic surface this yields infinitely many Anosov flows on a manifold with boundary.
To glue such pieces to each other, one must compute the slopes of the periodic orbits on the boundary, the number of orbits for each lift of the geodesic flow, as well as the orientations of the flows through the Birkhoff annuli. In general, this may depend on the choice of a section.
\end{remark}

\bibliographystyle{plain}
\bibliography{Bibliography.bib}

\end{document}